\newtheorem{thm}{Theorem}
\newtheorem{lemma}[thm]{Lemma}
\theoremstyle{definition}
\DeclareMathOperator{\rank}{rank}
\newsavebox{\fmbox}
\newcommand{\reals}{\ensuremath{\mathbb{R}}}
\newlength{\dyindent}
\newenvironment{dy*}{\refstepcounter{equation}\begin{list}{}%
{\setlength{\leftmargin}{\dyindent}\setlength{\labelwidth}{\dyindent}%
\addtolength{\labelwidth}{-\labelsep}}%
\item}%
{\end{list}}
\newcommand{\mr}{\mbox{mr}}
\newcommand{\nullity}{\mbox{nullity }}
\newcommand{\sgn}{\mbox{sgn}}
\newtheorem{theorem}{Theorem}
\newtheorem{remark}[theorem]{Remark}
\title{The minimum rank of a sign pattern matrix with a $1$-separation}
\author{Marina Arav\\
 Frank J. Hall\\
 Zhongshan Li\\ 
 Hein van der Holst\footnote{Corresponding author, E-mail: hvanderholst@gsu.edu}\\ 
Lihua Zhang\\ 
Wenyan Zhou \\
Department of Mathematics and Statistics \\
Georgia State University \\
Atlanta, GA 30303, USA
}
\date{}
\begin{document}
\maketitle

\begin{abstract}
A sign pattern matrix is a matrix whose entries are from the set $\{+,-,0\}$. If $A$ is an $m\times n$ sign pattern matrix, the qualitative class of $A$, denoted $Q(A)$, is the set of all real $m\times n$ matrices $B=[b_{i,j}]$ with $b_{i,j}$ positive (respectively, negative, zero) if $a_{i,j}$ is $+$ (respectively, $-$, $0$). The minimum rank of a sign pattern matrix $A$, denoted $\mr(A)$, is the minimum of the ranks of the real matrices in $Q(A)$. Determination of the minimum rank of a sign pattern matrix is a longstanding open problem. 

For the case that the sign pattern matrix has a $1$-separation, we present a formula to compute the minimum rank of a sign pattern matrix using the minimum ranks of certain generalized sign pattern matrices associated with the $1$-separation.
\end{abstract}

\section*{Introduction}
%\subsection{}
A \emph{sign pattern matrix} (or \emph{sign pattern}) is a matrix whose entries are from the set $\{+,-,0\}$. If $B=[b_{i,j}]$ is a real matrix, then $\sgn(B)$ is the sign pattern matrix $A=[a_{i,j}]$ with $a_{i,j}=+$ (respectively, $-$, $0$) if $b_{i,j}$ is positive (respectively, negative, zero). If $A$ is a sign pattern matrix, the \emph{sign pattern class} of $A$, denoted $Q(A)$, is the set of all real matrices $B=[b_{i,j}]$ with $\sgn(B)=A$.
The \emph{minimum rank} of a sign pattern matrix $A$, denoted $\mr(A)$, is the minimum of the ranks of matrices in $Q(A)$; see \cite{Hall07}. Recently, Li et al. \cite{LivdHolst2013} obtained a characterization of sign pattern matrices $A$ with $\mr(A)\leq 2$. 
In this paper, we present a formula to compute the minimum rank of a sign pattern matrix with a $1$-separation using the minimum ranks of certain generalized sign pattern matrices associated with the $1$-separation.

%%%%%

The notion of sign pattern matrix can be extended to generalized sign pattern matrices by allowing certain entries to be $\#$; see \cite{Hall07}. For a generalized sign pattern matrix $A$, the generalized sign pattern class of $A$, denoted $Q(A)$, is defined by allowing  entries of a matrix $B=[b_{i,j}]\in Q(A)$ to be any real number if the corresponding entries of $A$ are $\#$. The minimum rank $\mr(A)$ of a generalized sign pattern matrix $A$ is defined in the same way as for a sign pattern matrix: $\mr(A)$ is the minimum of the ranks of matrices in $Q(A)$.
If $A=[a_{i,j}]$ and $C=[c_{i,j}]$ are generalized sign pattern matrices of the same size, we write $A\leq C$ if for each entry of $A$, $a_{i,j} = c_{i,j}$ or $c_{i,j} = \#$. It is clear that if $A\leq C$, then $Q(A)\subseteq Q(C)$. For a generalized sign pattern $C$, let $\mathcal{C}$ be the set of all sign pattern matrices $A$ such that $A\leq C$. Then, clearly, $Q(C) = \cup_{A\in \mathcal{C}} Q(A)$. Hence the minimum rank of a generalized sign pattern matrix $C$ equals $\min_{A\in \mathcal{C}} \mr(A)$.

We define subtraction of two elements from $\{+,-,0\}$ as follows:
\begin{enumerate}
\item $(+)-(0)=+,(0)-(-)=+, (+)-(-)=+$,
\item $(-)-(+)=-,(0)-(+)=-,(-)-(0)=-$,
\item $(0)-(0)=0$,
\item $(+)-(+)=\#,(-)-(-)=\#$.
\end{enumerate}
The idea behind the definition of, for example, $(-)-(+)=-$ is that subtracting a positive number from a negative number gives a negative number.

%If $A$ is an $m\times n$ sign pattern matrix and $p\in \{+,-,0\}$, then by $A^1_p$ we denote the sign pattern matrix where we have replaced entry $a_{m,n}$ by $p$, and by $A^2_p$ we denote the generalized sign pattern where we have replaced entry $a_{1,1}$ by $a_{1,1}-p$.

Let 
\begin{equation*}
M = \begin{bmatrix}
A_{1,1} & A_{1,2} & 0\\
A_{2,1} & a_{2,2} & A_{2,3}\\
0 & A_{3,2} & A_{3,3}
\end{bmatrix}
\end{equation*}
be a sign pattern matrix, where $A_{1,2}$ has only one column and $A_{2,1}$ only one row. We also say that the sign pattern matrix $M$ has a \emph{$1$-separation}.
For $p\in \{+,-,0\}$, let
\begin{equation*}
R_p = \begin{bmatrix}
A_{1,1} & A_{1,2}\\
A_{2,1} & p
\end{bmatrix}
\text{ and }
S_p = \begin{bmatrix}
a_{2,2}-p & A_{2,3}\\
A_{3,2} & A_{3,3}
\end{bmatrix}.
\end{equation*}
(Indeed $S_p$ might be a generalized sign pattern matrix.)
In this paper, we prove that the following formula holds:
\begin{equation}\label{mainformula}
\begin{split}
\mr(M)  = \min \{ & \mr(A_{1,1})+ \mr(A_{3,3})+2,\\
& \mr([A_{1,1}\;A_{1,2}])+\mr([A_{3,2}\;A_{3,3}])+1,\\
& \mr(\begin{bmatrix}
A_{1,1}\\
A_{2,1}
\end{bmatrix})+\mr(\begin{bmatrix}
A_{2,3}\\
A_{3,3}
\end{bmatrix})+1,\\
& \mr(R_+)+\mr(S_+),\\
& \mr(R_0)+\mr(S_0),\\
& \mr(R_-)+\mr(S_-)
\}
\end{split} 
\end{equation}
In the next section, we show that each of the terms in the minimum is at least $\mr(M)$. In Section~\ref{sec:minrank1sep}, we show that at least one of the terms in the minimum attains $\mr(M)$.

Formula~(\ref{mainformula}) is analogous to the formula for the minimum rank of $1$-sums of graphs. This formula was given by Hsieh \cite{Hsieh2001}, and, independently, by Barioli, Fallat, and Hogben \cite{BarFalHog2004a}. In case the graph is permitted to have loops, a formula was given by Mikkelson~\cite{Mikkelson:2008aa}.

The reader can see Fallat and Hogben \cite{FalHog2007} for a survey on the minimum ranks of graphs.

\section{Inequalities}

Let $0\leq k\leq m,n,r,s$ and let 
\begin{equation*}
A=\begin{bmatrix}
A_{1,1}&A_{1,2}\\
A_{2,1}&A_{2,2}
\end{bmatrix} \text{ and } B=\begin{bmatrix}
B_{1,1}&B_{1,2}\\
B_{2,1}&B_{2,2}
\end{bmatrix}
\end{equation*}
be $m\times n$ and $r\times s$ matrices with real entries, respectively, where $A_{2,2}$ and $B_{1,1}$ are $k\times k$. In \cite{FalJoh1999a}, the \emph{$k-$subdirect sum} of $A$ and $B$, denoted by $A\oplus_k B$, was introduced; this is the matrix 
\begin{equation*}A \oplus_{k} B=
\begin{bmatrix}
A_{1,1}&A_{1,2}&0\\
A_{2,1}&A_{2,2}+B_{1,1} & B_{1,2}\\
0 & B_{2,1} & B_{2,2}
\end{bmatrix}
\end{equation*}

\begin{lemma}\cite{MR3010007}\label{lem:sum}
Let 
\begin{equation*}
C = \begin{bmatrix}
C_{1,1} & C_{1,2}\\
C_{2,1} & C_{2,2}
\end{bmatrix}
\quad\text{and}\quad
D = \begin{bmatrix}
D_{1,1} & D_{1,2}\\
D_{2,1} & D_{2,2}
\end{bmatrix},
\end{equation*}
where $C_{2,2}$ and $D_{1,1}$ are $k\times k$ matrices.
Then $\rank(C\oplus_k D)\leq \rank(C\oplus D)$. 
\end{lemma}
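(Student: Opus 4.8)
The plan is to produce, from an arbitrary matrix $X$ in (the sense of a fixed real matrix) representing $C\oplus D$ after adding $C_{2,2}+D_{1,1}$, an honest rank comparison; but since the statement is purely about real matrices, the cleanest route is to exhibit a linear map that sends $C\oplus D$ to $C\oplus_k D$ without increasing rank. Write $C$ as $m_1\times n_1$ and $D$ as $m_2\times n_2$, with the overlap block of size $k\times k$, so $C\oplus D$ is block-diagonal of size $(m_1+m_2)\times(n_1+n_2)$ and $C\oplus_k D$ has size $(m_1+m_2-k)\times(n_1+n_2-k)$. First I would write down the explicit $0/1$ matrices $P$ of size $(m_1+m_2-k)\times(m_1+m_2)$ and $Q$ of size $(n_1+n_2)\times(n_1+n_2-k)$ that perform the "fold": $P$ is the identity except that it adds the $k$ rows of $C\oplus D$ corresponding to the row-overlap of $C$ onto the $k$ rows corresponding to the row-overlap of $D$ (and deletes the now-redundant copy), and similarly $Q$ folds the corresponding columns. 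A direct block computation then verifies the identity
\begin{equation*}
P\,(C\oplus D)\,Q = C\oplus_k D .
\end{equation*}

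The key step is this identity, and it is essentially a bookkeeping check: partition $C\oplus D$ into the nine blocks $C_{1,1},C_{1,2},C_{2,1},C_{2,2},D_{1,1},D_{1,2},D_{2,1},D_{2,2}$ sitting in a block-diagonal arrangement (with zeros off the two diagonal blocks), and observe that left-multiplication by $P$ replaces the two separate block-rows indexed by the $C$-overlap and the $D$-overlap by their sum, turning the pair $\begin{bmatrix}C_{2,1}&C_{2,2}&0&0\end{bmatrix}$ and $\begin{bmatrix}0&0&D_{1,1}&D_{1,2}\end{bmatrix}$ into $\begin{bmatrix}C_{2,1}&C_{2,2}&D_{1,1}&D_{1,2}\end{bmatrix}$; right-multiplication by $Q$ then identifies the $C_{2,2}$-column-block with the $D_{1,1}$-column-block, producing the middle block $C_{2,2}+D_{1,1}$ and leaving $C_{1,1},C_{1,2},D_{2,1},D_{2,2}$ (and the two zero blocks) in the correct positions. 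Once the identity is in hand, $\rank(C\oplus_k D)=\rank\bigl(P(C\oplus D)Q\bigr)\leq \rank(C\oplus D)$ since multiplying by any matrix cannot increase rank.

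The only mild subtlety — and the part I would be most careful about — is the indexing of $P$ and $Q$: one must be sure the overlap rows of $C$ really are its \emph{last} $k$ rows while the overlap rows of $D$ are its \emph{first} $k$ rows (and likewise for columns), matching the convention in the definition of $A\oplus_k B$ given just above, so that the folding maps line the right blocks up. After that the argument is complete; no genericity or sign-pattern considerations enter, since this is a statement about fixed real matrices. (When we later apply it with $C\in Q(R_p)$ and $D\in Q(S_p)$, the block $C_{2,2}+D_{1,1}$ will be the $1\times 1$ entry $p+(a_{2,2}-p)$ realized in the class of $a_{2,2}$, which is exactly why the $\#$ in $S_p$ causes no trouble.)
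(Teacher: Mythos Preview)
Your argument is correct: exhibiting explicit $0/1$ matrices $P$ and $Q$ with $P(C\oplus D)Q=C\oplus_k D$ immediately gives the rank inequality, and your block description of the fold is accurate (one can take $P=\begin{bmatrix} I_{m_1-k}&0&0&0\\0&I_k&I_k&0\\0&0&0&I_{m_2-k}\end{bmatrix}$ and $Q$ its column analogue). Note, however, that the paper does not supply its own proof of this lemma; it is quoted from \cite{MR3010007}, so there is nothing in the present paper to compare your approach against. Your write-up stands on its own as a clean self-contained proof of the cited fact.
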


In the next theorem, we show that each term in the minimum of Formula~(\ref{mainformula}) is at least $\mr(M)$.

\begin{thm}
Let 
\begin{equation*}
M=\begin{bmatrix}
A_{1,1}&A_{1,2}&0\\
A_{2,1}& a_{2,2} & A_{2,3}\\
0 & A_{3,2} & A_{3,3}\\
\end{bmatrix}
\end{equation*} 
be a sign pattern matrix, where $A_{1,1}$ is $m_1\times n_1$, $A_{1,2}$ is $m_1\times 1$, $A_{2,1}$ is $1\times n_1$, $a_{2,2}$ is $1\times 1$, $A_{2,3}$ is $1\times n_2$, $A_{3, 2}$ is $m_2\times 1$ and $A_{3,3}$ is $m_2\times n_2$. Then each of the following inequalities hold:
\begin{enumerate}[(i)]
\item $\mr(A_{1,1})+\mr(A_{3,3})+2\geq \mr(M)$,
\item $\mr([A_{1,1}\;A_{1,2}])+\mr([A_{3,2}\;A_{3,3}])+1\geq \mr(M)$,
\item $\mr(\begin{bmatrix}
A_{1,1}\\
A_{2,1}\\
\end{bmatrix})+\mr(\begin{bmatrix}
A_{2,3}\\
A_{3,3}\\
\end{bmatrix})+1\geq \mr(M)$, and
\item 
for each $p\in \{+,-,0\}$, 
\begin{equation*}
\mr(\begin{bmatrix}
A_{1,1} & A_{1,2}\\
A_{2,1} & p
\end{bmatrix}
)+\mr(
\begin{bmatrix}
a_{2,2}-p & A_{2,3}\\
A_{3,2} & A_{3,3}
\end{bmatrix}
)\geq \mr(M).
\end{equation*} 
\end{enumerate}
\end{thm}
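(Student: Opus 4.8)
The plan is, for each of the four inequalities, to exhibit a real matrix $B\in Q(M)$ whose rank is at most the claimed bound; then $\mr(M)\le\rank(B)$ follows immediately from the definition of $\mr$. Parts (i)--(iii) are handled by a direct construction, choosing minimum-rank representatives for the relevant blocks, arbitrary representatives for the remaining blocks, and then bounding the rank by deleting the single middle row and/or column; part (iv) is handled via the $1$-subdirect sum and Lemma~\ref{lem:sum}.

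For (i), pick $B_{1,1}\in Q(A_{1,1})$ with $\rank(B_{1,1})=\mr(A_{1,1})$ and $B_{3,3}\in Q(A_{3,3})$ with $\rank(B_{3,3})=\mr(A_{3,3})$, and fill the blocks $A_{1,2},A_{2,1},a_{2,2},A_{2,3},A_{3,2}$ with arbitrary matrices carrying the prescribed signs; call the result $B\in Q(M)$. Deleting row $m_1+1$ and column $n_1+1$ of $B$ leaves the block-diagonal matrix $B_{1,1}\oplus B_{3,3}$, of rank $\mr(A_{1,1})+\mr(A_{3,3})$; since removing a row and then a column each lowers the rank by at most one, $\rank(B)\le\mr(A_{1,1})+\mr(A_{3,3})+2$. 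For (ii), choose $[B_{1,1}\;B_{1,2}]\in Q([A_{1,1}\;A_{1,2}])$ and $[B_{3,2}\;B_{3,3}]\in Q([A_{3,2}\;A_{3,3}])$ realizing the two minimum ranks, fill the middle-row blocks arbitrarily to get $B\in Q(M)$, and delete row $m_1+1$. The resulting matrix $\begin{bmatrix}B_{1,1}&B_{1,2}&0\\0&B_{3,2}&B_{3,3}\end{bmatrix}$ has row space equal to the sum of the row spaces of $[\,B_{1,1}\;B_{1,2}\;0\,]$ and $[\,0\;B_{3,2}\;B_{3,3}\,]$, hence rank at most $\mr([A_{1,1}\;A_{1,2}])+\mr([A_{3,2}\;A_{3,3}])$; restoring the deleted row adds at most $1$. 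Part (iii) follows by the analogous argument with the roles of rows and columns interchanged (deleting column $n_1+1$ and using the column spaces of $\begin{bmatrix}B_{1,1}\\B_{2,1}\end{bmatrix}$ and $\begin{bmatrix}B_{2,3}\\B_{3,3}\end{bmatrix}$), or equivalently by applying (ii) to $M^{T}$.

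For (iv), fix $p\in\{+,-,0\}$, choose $C\in Q(R_p)$ with $\rank(C)=\mr(R_p)$ and $D\in Q(S_p)$ with $\rank(D)=\mr(S_p)$, and partition $C$ and $D$ so that the trailing $1\times 1$ block of $C$ and the leading $1\times 1$ block of $D$ are the overlap blocks of a $1$-subdirect sum. Then $C\oplus_1 D$ has exactly the block shape of $M$, and by Lemma~\ref{lem:sum}, $\rank(C\oplus_1 D)\le\rank(C\oplus D)=\mr(R_p)+\mr(S_p)$. Every block of $C\oplus_1 D$ already carries the sign prescribed by $M$ except possibly the $(2,2)$ entry, which is $c+d$ with $\sgn(c)=p$ and $\sgn(d)=\sgn(a_{2,2}-p)$ (with $d$ an arbitrary real when $a_{2,2}-p=\#$). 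Arranging $\sgn(c+d)=a_{2,2}$ is the only genuine obstacle in the whole theorem, and it is overcome by noting that multiplying $C$ or $D$ by a positive scalar changes neither rank nor sign pattern: if $p=0$ then $c=0$ and $\sgn(d)=a_{2,2}$, so nothing is needed; if $p\neq 0$ and $a_{2,2}=p$ (the $\#$ case), rescale $C$ by a large positive constant so that $c+d$ has the sign of $c=a_{2,2}$; if $p\neq 0$ and $a_{2,2}=-p$, then $c$ and $d$ have opposite signs with $\sgn(d)=a_{2,2}$, so rescale $D$ by a large positive constant so that $c+d$ has the sign of $d$; and if $p\neq 0$ and $a_{2,2}=0$, then again $c$ and $d$ have opposite signs, so rescale $C$ by the positive constant $-d/c$ so that $c+d=0$. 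In each case the rescaled subdirect sum lies in $Q(M)$ and has rank at most $\mr(R_p)+\mr(S_p)$, completing the proof.
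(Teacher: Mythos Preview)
Your proof is correct and follows essentially the same approach as the paper: parts (i)--(iii) are handled identically by choosing minimum-rank representatives and using that deleting a row or column lowers rank by at most one, and part (iv) uses the $1$-subdirect sum together with Lemma~\ref{lem:sum} and positive rescaling to force $\sgn(c+d)=a_{2,2}$. The only difference is organizational: the paper splits the case analysis in (iv) according to the value of $a_{2,2}-p\in\{0,+,-,\#\}$, whereas you split according to whether $p=0$ and then the relation between $a_{2,2}$ and $p$; both cover the same nine $(p,a_{2,2})$ pairs, and your treatment of the $\#$ case (rescaling $C$ so that $c$ dominates) is in fact a bit more explicit than the paper's.
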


\begin{proof}
To see that $\mr(A_{1,1})+\mr(A_{3,3})+2\geq \mr(M)$, let $C_{1,1}\in Q(A_{1,1})$ and $C_{3,3} \in Q(A_{3,3})$ such that $\rank(C_{1,1}) = \mr(A_{1,1})$ and $\rank(C_{3,3})=\mr(A_{3,3})$.
Let $C_{1,2}\in Q(A_{1,2})$, $C_{2,1}\in Q(A_{2,1})$, $C_{2,3} \in Q(A_{2,3})$, $C_{3,2}\in Q(A_{3,2})$, and $\sgn(c_{2,2}) = a_{2,2}$. Then
\begin{equation*}
\begin{split}
\mr(A_{1,1}) + \mr(A_{3,3})+2 & = \rank(\begin{bmatrix}
C_{1,1} & 0\\
0 & C_{3,3}
\end{bmatrix}) + 2\\
&\geq \rank(\begin{bmatrix}
C_{1,1} & C_{1,2} & 0\\
C_{2,1} & c_{2,2} & C_{2,3}\\
0 & C_{3,2} & C_{3,3}
\end{bmatrix})\\
&\geq \mr(M),
\end{split}
\end{equation*}

To see that $\mr([A_{1,1}\;A_{1,2}])+\mr([A_{3,2}\;A_{3,3}])+1\geq \mr(M)$, let $[C_{1,1}\;C_{1,2}]\in Q([A_{1,1}\;A_{1,2}])$ and $[C_{3,2}\;C_{3,3}]\in Q([A_{3,2}\;B_{3,3}])$ be such that $\rank([C_{1,1}\;C_{1,2}])=\mr([A_{1,1}\;A_{1,2}])$ and $\rank([C_{3,2}\;C_{3,3}])=\mr([A_{3,2}\;A_{3,3}])$. Clearly,
\begin{equation*}
\rank([C_{1,1}\;C_{1,2}]) + \rank([C_{3,2}\;C_{3,3}])\geq \rank(\begin{bmatrix}
C_{1,1} & C_{1,2}  & 0\\
0 & C_{3,2} & C_{3,3}
\end{bmatrix}).
\end{equation*}
Since
\begin{equation*}
\rank(\begin{bmatrix}
C_{1,1} & C_{1,2}  & 0\\
0 & C_{3,2} & C_{3,3}
\end{bmatrix}) + 1\geq \rank(
\begin{bmatrix}
C_{1,1} & C_{1,2} & 0\\
C_{2,1} & c_{2,2} & C_{2,3}\\
0 & C_{3,2} & C_{3,3}
\end{bmatrix}),
\end{equation*}
we obtain $\mr([A_{1,1}\;A_{1,2}])+\mr([A_{3,2}\;A_{3,3}])+1\geq \mr(M)$.

The proof that 
$\mr(\begin{bmatrix}
A_{1,1}\\
A_{2,1}\\
\end{bmatrix})+\mr(\begin{bmatrix}
B_{1,2}\\
B_{2,2}\\
\end{bmatrix})+1\geq \mr(M)$ is similar to the proof of the previous case.

Let $p\in \{+,-,0\}$. To shorten notation, let
\begin{equation*}
R_p = \begin{bmatrix}
A_{1,1} & A_{1,2}\\
A_{2,1} & p
\end{bmatrix}
\text{ and }
S_p = \begin{bmatrix}
a_{2,2}-p & A_{2,3}\\
A_{3,2} & A_{3,3}
\end{bmatrix}.
\end{equation*}
To see that $\mr(R_p) + \mr(S_p) \geq \mr(M)$,
let
\begin{equation*}
C = \begin{bmatrix}
C_{1,1} & C_{1,2}\\
C_{2,1} & c
\end{bmatrix}\in Q(R_p)
\text{ and }
D = \begin{bmatrix}
d & C_{2,3}\\
C_{3,2} & C_{3,3}
\end{bmatrix}\in Q(S_p)
\end{equation*}
be such that $\rank(C) = \mr(R_p)$ and $\rank(D)=\mr(S_p)$.

We now do a case-checking.

Suppose first that $a_{2,2}-p=0$. Then $p=0$ and $a_{2,2}=0$.
Hence $c=0$ and $d=0$. Then $C\oplus_1 D \in Q(M)$, and, by Lemma~\ref{lem:sum}, $\mr(M)\leq \rank(C\oplus_1 D)\leq \rank(C)+\rank(D) = \mr(R_p) + \mr(S_p)$. 

Suppose next that $a_{2,2}-p=+$. Then one of the following holds:
\begin{enumerate}
\item $p=-$ and $a_{2,2}=0$, 
\item $p=0$ and $a_{2,2}=+$, and
\item $p=-$ and $a_{2,2}=+$.
\end{enumerate}
Suppose $p=-$ and $a_{2,2}=0$. By scaling $D$ by a positive scalar, we may assume that $d=-c$. Then $C\oplus_1 D\in Q(M)$, and, by Lemma~\ref{lem:sum}, $\mr(M) \leq \rank(C\oplus_1 D)\leq \rank(C)+\rank(D) = \mr(R_p) + \mr(S_p)$. Suppose $p=0$ and $a_{2,2}=+$. Then $C\oplus_1 D\in Q(M)$, and, by Lemma~\ref{lem:sum}, $\mr(M) \leq \rank(C\oplus_1 D)\leq \rank(C)+\rank(D) = \mr(R_p) + \mr(S_p)$. Suppose $p=-$ and $a_{2,2}=+$. By scaling $D$ by a positive scalar, we may assume that $c+d > 0$. Then $C\oplus_1 D\in Q(M)$, and, by Lemma~\ref{lem:sum}, $\mr(M)\leq \rank(C\oplus_1 D)\leq \rank(C)+\rank(D) = \mr(R_p) + \mr(S_p)$. 

The case where $a_{2,2}-p=-$ is similar.

Suppose finally that $a_{2,2}-p=\#$. Then one of the following holds:
\begin{enumerate}
\item $p=+$ and $a_{2,2}=+$, and
\item $p=-$ and $a_{2,2}=-$.
\end{enumerate}
Suppose $p=+$ and $a_{2,2}=+$. Then $C\oplus_1 D\in Q(M)$, and, by Lemma~\ref{lem:sum}, $\mr(M) \leq \rank(C\oplus_1 D)\leq \rank(C)+\rank(D) = \mr(R_p) + \mr(S_p)$. The case where $p=-$ and $a_{2,2}=-$ is similar.
\end{proof}

\section{Minimum Rank of Sign Pattern with 1-Separation}\label{sec:minrank1sep}

In this section we finish the proof that Formula~(\ref{mainformula}) is correct. First we prove some lemmas.

\begin{lemma}\label{lem:vertexadd}
For any $m\times n$ real matrix $B$ with $m, n\geq 1$, and any nonzero real numbers $a$ and $c$,
\begin{equation*}
\rank(\begin{bmatrix}
0 & a & 0\\
c & b_{1,1} & B_{1,2}\\
0 & B_{2,1} & B_{2,2}
\end{bmatrix}) = \rank(B_{2,2}) + 2.
\end{equation*}
\end{lemma}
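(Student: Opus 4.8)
The plan is to reduce the displayed $(m+1)\times(n+1)$ matrix, call it $N$, to a block-diagonal form by rank-preserving elementary row and column operations, and then simply read off the rank. Write $B$ in the block form
\begin{equation*}
B = \begin{bmatrix} b_{1,1} & B_{1,2} \\ B_{2,1} & B_{2,2}\end{bmatrix},
\end{equation*}
so that $B_{2,2}$ is $(m-1)\times(n-1)$, and index the columns of $N$ as $1,\,2,\,3,\dots,n+1$ and its rows as $1,\,2,\,3,\dots,m+1$. The idea is that the two guaranteed nonzero entries $a$ and $c$ will be used as pivots to split off a $2\times 2$ block of rank $2$.

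First I would use the first row of $N$, whose only nonzero entry is $a$ in column $2$, as a pivot: adding suitable multiples of row $1$ to rows $2,\dots,m+1$ makes every entry of column $2$ below the top one equal to $0$. This replaces $b_{1,1}$ by $0$ and the vector $B_{2,1}$ by $0$, while leaving columns $3,\dots,n+1$ and column $1$ unchanged (row $1$ is zero in those positions). The matrix now reads
\begin{equation*}
\begin{bmatrix} 0 & a & 0 \\ c & 0 & B_{1,2} \\ 0 & 0 & B_{2,2}\end{bmatrix}.
\end{equation*}
Next I would use the entry $c$ in row $2$, column $1$ — now the only nonzero entry of column $1$ — as a pivot for column operations: subtracting suitable multiples of column $1$ from columns $3,\dots,n+1$ replaces the block $B_{1,2}$ in row $2$ by $0$ without disturbing rows $1$ and $3,\dots,m+1$, since column $1$ is zero there. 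The result is the block-diagonal matrix with diagonal blocks $\begin{bmatrix} 0 & a \\ c & 0 \end{bmatrix}$ and $B_{2,2}$, that is, $\begin{bmatrix} 0 & a \\ c & 0 \end{bmatrix}\oplus B_{2,2}$.

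Since elementary row and column operations preserve rank, and since $\begin{bmatrix} 0 & a \\ c & 0 \end{bmatrix}$ is nonsingular because $a,c\neq 0$, we conclude $\rank(N) = 2 + \rank(B_{2,2})$. In the degenerate case $m=n=1$ the blocks $B_{1,2}$, $B_{2,1}$, $B_{2,2}$ are empty, $\rank(B_{2,2})=0$, and $N=\begin{bmatrix} 0 & a \\ c & b_{1,1}\end{bmatrix}$ indeed has rank $2$, consistent with the formula. There is no substantial obstacle here: the argument is routine Gaussian elimination, and the only point needing a little care is the bookkeeping of block sizes together with the observation that clearing column $2$ and then row $2$ do not interfere, which holds precisely because the chosen pivot row and pivot column have all their remaining entries equal to zero.
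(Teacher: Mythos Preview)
Your proof is correct and takes essentially the same approach as the paper: both reduce $N$ to the block-diagonal form $\begin{bmatrix}0&a\\c&0\end{bmatrix}\oplus B_{2,2}$ by rank-preserving operations, the paper encoding its row and column operations as explicit matrices $P$ and $Q$ with $PNQ$ block-diagonal, while you describe the same elimination step by step. The only cosmetic differences are that the paper also scales to turn $a,c$ into $1$'s and symmetrically splits the clearing of $b_{1,1}$ between the row and column steps, neither of which matters for the rank computation.
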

\begin{proof}
Let
\begin{equation*}
P = \begin{bmatrix}
\frac{1}{a} & 0 & 0\\
-\frac{b_{1,1}}{2a} & 1 & 0\\
-\frac{B_{2,1}}{a} & 0 & I_{m-2}
\end{bmatrix}\quad\text{and}\quad
Q = \begin{bmatrix}
\frac{1}{c} & -\frac{b_{1,1}}{2c} & -\frac{B_{1,2}}{c}\\
0 & 1 & 0\\
0 & 0 & I_{n-2}
\end{bmatrix}.
\end{equation*}
Then
\begin{equation*}
P \begin{bmatrix}
0 & a & 0\\
c & b_{1,1} & B_{1,2}\\
0 & B_{2,1} & B_{2,2}
\end{bmatrix} Q = 
\begin{bmatrix}
0 & 1 & 0\\
1 & 0 & 0\\
0 & 0 & B_{2,2}
\end{bmatrix}.
\end{equation*}
From this the lemma easily follows.
\end{proof}

\begin{lemma}\label{lem:adjoin}
Let $A = \begin{bmatrix}
A_{1,1} & A_{1,2}\\
A_{2,1} & A_{2,2}
\end{bmatrix}$ be a real matrix, where $A_{1,1}$ is $m_1\times n_1$, $A_{1,2}$ is $m_1\times n_2$, $A_{2,1}$ is $m_2\times n_1$, and $A_{2,2}$ is $m_2\times n_2$.
If $x\in \ker(A_{2,2}^T)$ and $y\in \ker(A_{2,2})$, then
\begin{equation*}
\rank \begin{bmatrix}
0 & x^T A_{2,1} & 0\\
A_{1,2} y & A_{1,1} & A_{1,2}\\
0 & A_{2,1} & A_{2,2}
\end{bmatrix} = \rank A.
\end{equation*}
\end{lemma}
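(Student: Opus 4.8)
The plan is to mimic the proof of Lemma~\ref{lem:vertexadd}: exhibit invertible matrices $P$ and $Q$ so that $P\tilde A Q$ is the matrix $A$ bordered by a zero row and a zero column, where $\tilde A$ denotes the displayed $(1+m_1+m_2)\times(1+n_1+n_2)$ matrix. The only properties of $x$ and $y$ that enter are the hypotheses, rephrased as $x^{T}A_{2,2}=0$ and $A_{2,2}y=0$.

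First I would clear the top row. The top block-row of $\tilde A$ is $[\,0\ \ x^{T}A_{2,1}\ \ 0\,]$, and since $x^{T}A_{2,2}=0$ this is exactly $x^{T}$ times the bottom block-row $[\,0\ \ A_{2,1}\ \ A_{2,2}\,]$ (which has $m_2$ rows). Hence left-multiplying by the invertible matrix
\begin{equation*}
P=\begin{bmatrix} 1 & 0 & -x^{T}\\ 0 & I_{m_1} & 0\\ 0 & 0 & I_{m_2}\end{bmatrix}
\end{equation*}
replaces the first row by $0$ and leaves the other rows of $\tilde A$ untouched.

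Dually, in $P\tilde A$ the first block-column is $[\,0\ \ (A_{1,2}y)^{T}\ \ 0\,]^{T}$, and because $A_{2,2}y=0$ this equals the third block-column $[\,0\ \ A_{1,2}^{T}\ \ A_{2,2}^{T}\,]^{T}$ times $y$. So right-multiplying by the invertible matrix
\begin{equation*}
Q=\begin{bmatrix} 1 & 0 & 0\\ 0 & I_{n_1} & 0\\ -y & 0 & I_{n_2}\end{bmatrix}
\end{equation*}
clears the first column as well, giving
\begin{equation*}
P\tilde A Q=\begin{bmatrix} 0 & 0 & 0\\ 0 & A_{1,1} & A_{1,2}\\ 0 & A_{2,1} & A_{2,2}\end{bmatrix}.
\end{equation*}
Since $P$ and $Q$ are invertible, $\rank\tilde A=\rank(P\tilde A Q)=\rank A$, as claimed. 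There is no real obstacle here; the only things to verify are the block-size bookkeeping (so that $P$ and $Q$ have the right dimensions and the products make sense) and the two one-line identities expressing the border blocks $x^{T}A_{2,1}$ and $A_{1,2}y$ as the stated multiples of a full block-row and block-column. Indeed, all the content is already built into the way those border terms were chosen so that the cancellations happen.
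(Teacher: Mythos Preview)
Your argument is correct. The block sizes line up, the two key identities $x^{T}A_{2,2}=0$ and $A_{2,2}y=0$ are exactly what make the row and column eliminations work, and your $P$ and $Q$ are unit-triangular and hence invertible, so $\rank\tilde A=\rank(P\tilde A Q)=\rank A$ follows immediately.

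The paper takes a slightly different route. Instead of applying invertible operations to $\tilde A$, it writes $\tilde A=PAQ$ for \emph{non-square} matrices
\[
P=\begin{bmatrix}0 & x^{T}\\ I_{m_1} & 0\\ 0 & I_{m_2}\end{bmatrix},\qquad
Q=\begin{bmatrix}0 & I_{n_1} & 0\\ y & 0 & I_{n_2}\end{bmatrix},
\]
which gives $\rank\tilde A\le\rank A$, and then observes that the reverse inequality is clear because $A$ is a submatrix of $\tilde A$. Both arguments rest on the same two cancellations; the difference is purely organizational. Your version has the small advantage of yielding equality in one stroke rather than via two separate inequalities, while the paper's version makes explicit that the bordered matrix is literally a product $PAQ$, a viewpoint that can be handy when one wants to iterate such constructions.
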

\begin{proof}
Let
\begin{equation*}
P = \begin{bmatrix}
0 & x^T\\
I_{m_1} & 0\\
0 & I_{m_2}
\end{bmatrix}\quad\text{and}\quad
Q= \begin{bmatrix}
0 & I_{n_1} & 0\\
y & 0 & I_{n_2}
\end{bmatrix}.
\end{equation*}
Then 
\begin{equation*}
P A Q= \begin{bmatrix}
0 & x^T A_{2,1} & 0\\
A_{1,2} y & A_{1,1} & A_{1,2}\\
0 & A_{2,1} & A_{2,2}
\end{bmatrix}.
\end{equation*}
Hence, $\rank \begin{bmatrix}
0 & x^T A_{2,1} & 0\\
A_{1,2} y & A_{1,1} & A_{1,2}\\
0 & A_{2,1} & A_{2,2}
\end{bmatrix} \leq \rank A$. The other inequality is clear.
\end{proof}

\begin{lemma}\label{lem:decomp}
Let $A = \begin{bmatrix}
A_{1,1} & A_{1,2} & 0\\
A_{2,1} & a_{2,2} & A_{2,3}\\
0 & A_{3,2} & A_{3,3}
\end{bmatrix}$ be an $m\times n$ real matrix, where $A_{1,1}$ is $m_1\times n_1$ and $A_{3,3}$ is $m_2\times n_2$, (and so $m=m_1+m_2+1$ and $n=n_1+n_2+1$). Then at least one of the following holds:
\begin{enumerate}[(i)]
\item There exist vectors $v \in \mathbb{R}^{m_1}$ and $z\in \mathbb{R}^{n_1}$ such that
\begin{equation*}
\rank(\begin{bmatrix}
A_{1,1} & A_{1,2}\\
A_{2,1} & v^T A_{1,1} z
\end{bmatrix})+\rank(
\begin{bmatrix}
a_{2,2} - v^T A_{1,1} z & A_{2,3}\\
A_{3,2} & A_{3,3}
\end{bmatrix}) = \rank(A).
\end{equation*}
\item\label{item:sumcase2} $\rank(\begin{bmatrix}
A_{1,1}\\
A_{2,1}
\end{bmatrix}) + \rank(\begin{bmatrix}
A_{2,3}\\
A_{3,3}
\end{bmatrix})+1 = \rank(A)$.
\item\label{item:sumcase3} $\rank([A_{1,1}\;A_{1,2}]) + \rank([A_{3,2}\;A_{3,3}]) +1 = \rank(A)$.
\item $\rank(A_{1,1}) + \rank(A_{3,3}) + 2=\rank(A)$.
\end{enumerate}
\end{lemma}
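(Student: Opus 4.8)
The plan is to delete the ``middle'' row of $A$ (row $m_1+1$), reduce everything to two $\{0,1\}$-valued invariants, and carry out the four-way case split they determine.

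First I would set up notation and two elementary facts. Write $a_1=\rank(A_{1,1})$, $a_2=\rank(A_{3,3})$, $b_1=\rank[A_{1,1}\ A_{1,2}]$, $b_2=\rank[A_{3,2}\ A_{3,3}]$, $c_1=\rank\bigl[\begin{smallmatrix}A_{1,1}\\ A_{2,1}\end{smallmatrix}\bigr]$, $c_2=\rank\bigl[\begin{smallmatrix}A_{2,3}\\ A_{3,3}\end{smallmatrix}\bigr]$; each $b_i,c_i$ is $a_i$ or $a_i+1$. Let $A^-=\bigl[\begin{smallmatrix}A_{1,1}&A_{1,2}&0\\ 0&A_{3,2}&A_{3,3}\end{smallmatrix}\bigr]$ be $A$ with row $m_1+1$ deleted, and $\rho=[A_{2,1}\ a_{2,2}\ A_{2,3}]$ the deleted row. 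Deleting column $n_1+1$ from $A^-$ leaves $A_{1,1}\oplus A_{3,3}$, of rank $a_1+a_2$, and restoring the column $\bigl[\begin{smallmatrix}A_{1,2}\\ A_{3,2}\end{smallmatrix}\bigr]$ raises the rank by $1$ exactly when it is not in $\mathrm{colsp}(A_{1,1})\oplus\mathrm{colsp}(A_{3,3})$, i.e.\ when $\eta:=\max(b_1-a_1,b_2-a_2)$ equals $1$; hence $\rank(A^-)=a_1+a_2+\eta$. Also $\rank(A)=\rank(A^-)+\epsilon$ with $\epsilon\in\{0,1\}$, where $\epsilon=0$ iff $\rho\in\mathrm{rowsp}(A^-)$. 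Finally, if $\epsilon=0$, then expressing $\rho$ as a combination of the rows of $A^-$ yields $v_0\in\mathbb{R}^{m_1}$, $v_0'\in\mathbb{R}^{m_2}$ with $A_{2,1}=v_0^{T}A_{1,1}$, $A_{2,3}=v_0'^{T}A_{3,3}$ and $a_{2,2}=v_0^{T}A_{1,2}+v_0'^{T}A_{3,2}$; the first two identities force $c_1=a_1$ and $c_2=a_2$.

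Next I would split on $(\eta,\epsilon)\in\{0,1\}^2$, using $\rank(A)=a_1+a_2+\eta+\epsilon$. If $(\eta,\epsilon)=(1,1)$ then $\rank(A)=a_1+a_2+2$, which is conclusion~(iv). If $(\eta,\epsilon)=(0,1)$ then $\eta=0$ gives $b_1=a_1$, $b_2=a_2$, so $\rank(A)=a_1+a_2+1=b_1+b_2+1$, which is conclusion~(iii). If $(\eta,\epsilon)=(1,0)$ then $\epsilon=0$ gives $c_1=a_1$, $c_2=a_2$, so $\rank(A)=a_1+a_2+1=c_1+c_2+1$, which is conclusion~(ii).

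The only substantive case is $(\eta,\epsilon)=(0,0)$, i.e.\ $\rank(A)=a_1+a_2=\rank(A_{1,1})+\rank(A_{3,3})$, where I claim conclusion~(i) holds. Here $\eta=0$ gives $b_1=a_1$, $b_2=a_2$, so $A_{1,2}\in\mathrm{colsp}(A_{1,1})$ and $A_{3,2}\in\mathrm{colsp}(A_{3,3})$: pick $z_0\in\mathbb{R}^{n_1}$, $z_0'\in\mathbb{R}^{n_2}$ with $A_{1,1}z_0=A_{1,2}$ and $A_{3,3}z_0'=A_{3,2}$; and $\epsilon=0$ supplies $v_0,v_0'$ as above. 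Substituting into $a_{2,2}=v_0^{T}A_{1,2}+v_0'^{T}A_{3,2}$ gives the key identity $a_{2,2}=v_0^{T}A_{1,1}z_0+v_0'^{T}A_{3,3}z_0'$. Set $p:=v_0^{T}A_{1,1}z_0$ (which is of the required form $v^{T}A_{1,1}z$), so $a_{2,2}-p=v_0'^{T}A_{3,3}z_0'$. Then $\bigl[\begin{smallmatrix}A_{1,1}&A_{1,2}\\ A_{2,1}&p\end{smallmatrix}\bigr]=\bigl[\begin{smallmatrix}A_{1,1}&A_{1,1}z_0\\ v_0^{T}A_{1,1}&v_0^{T}A_{1,1}z_0\end{smallmatrix}\bigr]$ has last column $\bigl[\begin{smallmatrix}A_{1,1}\\ v_0^{T}A_{1,1}\end{smallmatrix}\bigr]z_0$ and last row $v_0^{T}[A_{1,1}\ A_{1,1}z_0]$, each a combination of the preceding columns/rows, so its rank is $\rank(A_{1,1})=a_1$; symmetrically $\rank\bigl[\begin{smallmatrix}a_{2,2}-p&A_{2,3}\\ A_{3,2}&A_{3,3}\end{smallmatrix}\bigr]=\rank(A_{3,3})=a_2$. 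Adding gives exactly $a_1+a_2=\rank(A)$, which is conclusion~(i).

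I expect the only genuinely non-routine point to be that identity $a_{2,2}=v_0^{T}A_{1,1}z_0+v_0'^{T}A_{3,3}z_0'$ in the last case: one has to realize that the row-combination coefficients produced by ``$\rho\in\mathrm{rowsp}(A^-)$'' can be paired against the column-space witnesses $z_0,z_0'$ to pin down the precise value of $p$ at which both blocks $\bigl[\begin{smallmatrix}A_{1,1}&A_{1,2}\\ A_{2,1}&p\end{smallmatrix}\bigr]$ and $\bigl[\begin{smallmatrix}a_{2,2}-p&A_{2,3}\\ A_{3,2}&A_{3,3}\end{smallmatrix}\bigr]$ simultaneously drop to their minimal ranks $a_1$ and $a_2$. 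Everything else is routine rank bookkeeping; a few edge cases ($m_i=0$ or $n_i=0$, i.e.\ empty blocks) should be checked but cause no trouble.
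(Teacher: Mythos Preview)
Your proof is correct. Both your argument and the paper's proceed by a four-way case split on whether the middle row and the middle column can be ``absorbed'', but the invariants differ slightly. The paper tests whether $[A_{2,1}\ A_{2,3}]$ lies in the row space of $A_{1,1}\oplus A_{3,3}$ and whether $\bigl[\begin{smallmatrix}A_{1,2}\\A_{3,2}\end{smallmatrix}\bigr]$ lies in its column space (phrased dually via kernels); your $\eta$ is exactly the column condition, but your $\epsilon$ instead asks whether the \emph{full} middle row lies in $\mathrm{rowsp}(A^{-})$, which is a strictly stronger requirement. The two case splits therefore do not match up case by case, but each exhausts all possibilities and each case lands in one of (i)--(iv), which is all the lemma asks.

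For the execution, the paper invokes two preparatory lemmas (an ``adjoin from kernel vectors'' lemma and a bordered-rank lemma) and, in the main case, an explicit $P,Q$ pair with $PAQ$ block-diagonal. You bypass all of this with direct rank bookkeeping, and in the key $(\eta,\epsilon)=(0,0)$ case you exploit that the row-combination witnesses $v_0,v_0'$ (from $\epsilon=0$) can be paired against the column witnesses $z_0,z_0'$ (from $\eta=0$) to produce the identity $a_{2,2}=v_0^{T}A_{1,1}z_0+v_0'^{T}A_{3,3}z_0'$, which pins down $p$ so that both blocks simultaneously hit their minimal ranks $a_1$ and $a_2$. Your route is more elementary and self-contained; the paper's is more symmetric in its treatment of rows versus columns and makes the block-diagonal structure explicit.
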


\begin{proof}
Suppose first that $[A_{2,1}\;A_{2,3}]x = 0$ for all $x\in \ker(A_{1,1}\oplus A_{3,3})$ and that $y^T \begin{bmatrix}
A_{1,2}\\
A_{3,2}
\end{bmatrix} = 0$ for all $y\in \ker((A_{1,1}\oplus A_{3,3})^T)$.
Then there exist a vector $v\in \mathbb{R}^{m_1}$ such that $v^T A_{1,1} = A_{2,1}$ and a vector $z\in \mathbb{R}^{n_1}$ such that $A_{1,1} z = A_{1,2}$. Let
\begin{equation*}
P = \begin{bmatrix}
I_{m_1} & 0 & 0\\
v^T & 0 & 0\\
-v^T & 1 & 0\\
0 & 0 & I_{m_2}
\end{bmatrix}\quad\text{and}\quad
Q=\begin{bmatrix}
I_{n_1} & z & -z & 0\\
0 & 0 & 1 & 0\\
0 & 0 & 0 & I_{n_2}
\end{bmatrix}.
\end{equation*}
A calculation shows that 
\begin{equation*}
P A Q = \begin{bmatrix}
A_{1,1} & A_{1,2}\\
A_{2,1} & v^T A_{1,1} z
\end{bmatrix}\oplus
\begin{bmatrix}
a_{2,2} - v^T A_{1,1} z & A_{2,3}\\
A_{3,2} & A_{3,3}
\end{bmatrix}.
\end{equation*}
Hence 
\begin{equation*}
\rank(A) \geq \rank(\begin{bmatrix}
A_{1,1} & A_{1,2}\\
A_{2,1} & v^T A_{1,1} z
\end{bmatrix}) + \rank(
\begin{bmatrix}
a_{2,2} - v^T A_{1,1} z & A_{2,3}\\
A_{3,2} & A_{3,3}
\end{bmatrix})
\end{equation*}
By Lemma~\ref{lem:sum}, the opposite inequality also holds.

Suppose next that $[A_{2,1}\;A_{2,3}]x = 0$ for all $x\in \ker(A_{1,1}\oplus A_{3,3})$ and that there exists a  $y\in \ker((A_{1,1}\oplus A_{3,3})^T)$ such that $y^T \begin{bmatrix}
A_{1,2}\\
A_{3,2}
\end{bmatrix} = e \not= 0$.
By Lemma~\ref{lem:adjoin}, 
\begin{equation*}
\rank(\begin{bmatrix}
0 & 0 & e & 0\\
0 & A_{1,1} & A_{1,2} & 0\\
0 & A_{2,1} & a_{2,2} & A_{2,3}\\
0 & 0 & A_{3,2} & A_{3,3}
\end{bmatrix}) = \rank(A).
\end{equation*}
Hence 
\begin{equation*}
1 + \rank(
\begin{bmatrix}
A_{1,1} & 0\\
A_{2,1} & A_{2,3}\\
0 & A_{3,3}
\end{bmatrix}) = \rank(A).
\end{equation*}
Since 
\begin{equation*}
\nullity(\begin{bmatrix}
A_{1,1} & 0\\
A_{2,1} & A_{2,3}\\
0 & A_{3,3}
\end{bmatrix}) = 
\nullity(
\begin{bmatrix}
A_{1,1} & 0\\
0 & A_{3,3}
\end{bmatrix}),
\end{equation*}
we obtain 
\begin{equation*}
\rank(\begin{bmatrix}
A_{1,1} & 0\\
A_{2,1} & A_{2,3}\\
0 & A_{3,3}
\end{bmatrix}) = 
\rank(\begin{bmatrix}
A_{1,1} & 0\\
0 & A_{3,3}
\end{bmatrix}).
\end{equation*}
Hence $\rank(A) = \rank(A_{1,1}) + \rank(A_{3,3})+1$. 
From $[A_{2,1}\;A_{2,3}]x = 0$ for all $x\in \ker(A_{1,1}\oplus A_{3,3})$, it follows that $\rank(\begin{bmatrix}
A_{1,1}\\
A_{1,2}
\end{bmatrix})
 = \rank(A_{1,1})$ and $\rank(
\begin{bmatrix} 
 A_{2,3}\\
 A_{3,3}
\end{bmatrix}) = \rank(A_{3,3})$. Thus $\rank(\begin{bmatrix}
A_{1,1}\\
A_{2,1}
\end{bmatrix}) + \rank(\begin{bmatrix}
A_{2,3}\\
A_{3,3}
\end{bmatrix})+1 = \rank(A)$.

The case that there exists an $x\in \ker(A_{1,1}\oplus A_{3,3})$ such that  $[A_{2,1}\;A_{2,3}]x$ is nonzero and $y^T \begin{bmatrix}
A_{1,2}\\
A_{3,2}
\end{bmatrix} = 0$ for all $y\in \ker((A_{1,1}\oplus A_{3,3})^T)$ yields $\rank([A_{1,1}\;A_{1,2}]) + \rank([A_{3,2}\;A_{3,3}]) +1 = \rank(A)$.

Hence, we are left with the case that there exist an $x\in \ker(A_{1,1}\oplus A_{3,3})$ such that  $f = [A_{2,1}\;A_{2,3}]x$ is nonzero and there exists a $y\in \ker((A_{1,1}\oplus A_{3,3})^T)$ such that $e = y^T \begin{bmatrix}
A_{1,2}\\
A_{3,2}
\end{bmatrix}$ is nonzero.
Then, by Lemma~\ref{lem:adjoin}, 
\begin{equation*}
\rank(\begin{bmatrix}
0 & 0 & e & 0\\
0 & A_{1,1} & A_{1,2} & 0\\
f & A_{2,1} & a_{2,2} & A_{2,3}\\
0 & 0 & A_{3,2} & A_{3,3}
\end{bmatrix}) = \rank(A).
\end{equation*}
By Lemma~\ref{lem:vertexadd}, 
\begin{equation*}
\rank(\begin{bmatrix}
A_{1,1} & 0\\
0 & A_{3,3}
\end{bmatrix}) + 2 = \rank(A).
\end{equation*}
Thus $\rank(A_{1,1}) + \rank(A_{3,3}) + 2 = \rank(A)$.
\end{proof}

\begin{remark}
The proof shows that if Case~(\ref{item:sumcase2}) happens, then $A_{2,1}$ belongs to the row space of $A_{1,1}$, and $A_{2,3}$ belongs to the row space of $A_{3,3}$. Similarly, if Case~(\ref{item:sumcase3}) happens, then $A_{1,2}$ belongs to the column space of $A_{1,1}$, and $A_{2,3}$ belongs to the column space of $A_{3,3}$.
\end{remark}

\begin{thm}
Let
\begin{equation*}
M = \begin{bmatrix}
A_{1,1} & A_{1,2}& 0\\
A_{2,1} & a_{2,2} & A_{2,3}\\
0 & A_{3,2} & A_{3,3}
\end{bmatrix},
\end{equation*}
where $A_{1,2}$ is $n_1\times 1$, $A_{2,1}$ is $m_1\times 1$, and, for $p\in \{+,-,0\}$, let
\begin{equation*}
R_p = \begin{bmatrix}
A_{1,1} & A_{1,2}\\
A_{2,1} & p
\end{bmatrix}
\text{ and }
S = \begin{bmatrix}
a_{2,2}-p & A_{2,3}\\
A_{3,2} & A_{3,3}
\end{bmatrix}.
\end{equation*}
Then 
\begin{equation*}%\label{eq:formula}
\begin{split}
\mr(M)  = \min \{ & \mr(A_{1,1})+ \mr(A_{3,3})+2,\\
& \mr([A_{1,1}\;A_{1,2}])+\mr([A_{3,2}\;A_{3,3}])+1,\\
& \mr(\begin{bmatrix}
A_{1,1}\\
A_{2,1}
\end{bmatrix})+\mr(\begin{bmatrix}
A_{2,3}\\
A_{3,3}
\end{bmatrix})+1,\\
& \mr(R_+)+\mr(S_+),\\
& \mr(R_0)+\mr(S_0),\\
& \mr(R_-)+\mr(S_-)
\}
\end{split} 
\end{equation*}
\end{thm}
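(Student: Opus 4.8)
The plan is to combine the inequalities already proved in the previous section --- which show that each of the six quantities appearing in the minimum is at least $\mr(M)$, and hence that the whole minimum is at least $\mr(M)$ --- with Lemma~\ref{lem:decomp}, applied to a minimum-rank matrix in $Q(M)$, so as to exhibit one of the six quantities that is \emph{at most} $\mr(M)$.

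Concretely, I would fix a real matrix $B\in Q(M)$ with $\rank(B)=\mr(M)$, partitioned conformally with $M$, so that $B_{1,1}\in Q(A_{1,1})$, $B_{1,2}\in Q(A_{1,2})$, $\sgn(b_{2,2})=a_{2,2}$, and so on. Applying Lemma~\ref{lem:decomp} to $B$, at least one of the four cases (i)--(iv) of that lemma holds. Cases~(ii), (iii) and (iv) each take one line; for instance, in Case~(iv),
\begin{equation*}
\mr(M)=\rank(B)=\rank(B_{1,1})+\rank(B_{3,3})+2\geq \mr(A_{1,1})+\mr(A_{3,3})+2 ,
\end{equation*}
using only $B_{1,1}\in Q(A_{1,1})$ and $B_{3,3}\in Q(A_{3,3})$; Cases~(ii) and (iii) likewise bound $\mr(M)$ below by the two ``$+1$'' quantities, since the relevant (bordered) submatrices of $B$ lie in the appropriate qualitative classes. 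In each of these three cases the minimum of the six quantities is therefore at most $\mr(M)$, and together with the inequalities of the previous section this yields equality.

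The substantive case is Case~(i) of Lemma~\ref{lem:decomp}. There the lemma supplies vectors $v\in\mathbb{R}^{m_1}$ and $z\in\mathbb{R}^{n_1}$ such that, writing $\lambda=v^{T}B_{1,1}z\in\mathbb{R}$,
\begin{equation*}
\rank(\begin{bmatrix}B_{1,1}&B_{1,2}\\ B_{2,1}&\lambda\end{bmatrix})+\rank(\begin{bmatrix}b_{2,2}-\lambda&B_{2,3}\\ B_{3,2}&B_{3,3}\end{bmatrix})=\rank(B)=\mr(M).
\end{equation*}
Set $q=\sgn(\lambda)\in\{+,-,0\}$; then the first matrix above lies in $Q(R_q)$. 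The one point that must be checked is that the second matrix lies in $Q(S_q)$, i.e.\ that the sign of the scalar $b_{2,2}-\lambda$ is consistent with the formal difference $a_{2,2}-q$ defined in the Introduction (recall $\sgn(b_{2,2})=a_{2,2}$ and $\sgn(\lambda)=q$). When $a_{2,2}=q\in\{+,-\}$ the formal difference is $\#$ and there is nothing to check. In every other combination a short inspection --- for example, if $a_{2,2}=+$ and $q=-$ then $b_{2,2}>0>\lambda$, so $b_{2,2}-\lambda>0$, matching $(+)-(-)=+$ --- shows that the actual sign of $b_{2,2}-\lambda$ is exactly the one prescribed by the subtraction table, so the second matrix indeed lies in $Q(S_q)$. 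Hence
\begin{equation*}
\mr(R_q)+\mr(S_q)\leq \rank(\begin{bmatrix}B_{1,1}&B_{1,2}\\ B_{2,1}&\lambda\end{bmatrix})+\rank(\begin{bmatrix}b_{2,2}-\lambda&B_{2,3}\\ B_{3,2}&B_{3,3}\end{bmatrix})=\mr(M),
\end{equation*}
so once more the minimum of the six quantities is at most $\mr(M)$, which completes the argument.

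I expect the only step genuinely requiring care to be this last sign-compatibility check for $b_{2,2}-\lambda$: it is precisely the case analysis encoded in the definition of subtraction on $\{+,-,0\}$, and it is what forces the use of the generalized (rather than ordinary) sign pattern $S_q$. Everything else is bookkeeping --- the monotonicity $\mr(N)\le \rank(C)$ whenever $C\in Q(N)$, applied to blocks and bordered blocks of $B$, together with the exact rank equalities delivered by Lemma~\ref{lem:decomp}.
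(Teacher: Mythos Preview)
Your proposal is correct and follows essentially the same route as the paper's own proof: both invoke the inequalities of the previous section for one direction, fix a minimum-rank realization in $Q(M)$, apply Lemma~\ref{lem:decomp}, and dispatch the four cases in the same way, with Case~(i) handled by setting $p=\sgn(v^{T}B_{1,1}z)$. Your treatment is in fact slightly more explicit than the paper's in verifying that the scalar $b_{2,2}-\lambda$ has sign compatible with the formal entry $a_{2,2}-q$ of $S_q$; the paper simply asserts the containment in $Q(S_+)$ (respectively $Q(S_0)$, $Q(S_-)$) without spelling out the case check.
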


\begin{proof}
By the previous section,
\begin{equation}\label{eq:formula2}
\begin{split}
\mr(M) \leq \min \{ & \mr(A_{1,1})+ \mr(A_{3,3})+2,\\
& \mr([A_{1,1}\;A_{1,2}])+\mr([A_{3,2}\;A_{3,3}])+1,\\
& \mr(\begin{bmatrix}
A_{1,1}\\
A_{2,1}
\end{bmatrix})+\mr(\begin{bmatrix}
A_{2,3}\\
A_{3,3}
\end{bmatrix})+1,\\
& \mr(R_+)+\mr(S_+),\\
& \mr(R_0)+\mr(S_0),\\
& \mr(R_-)+\mr(S_-)
\} 
\end{split} 
\end{equation}
We now show that at least one of the terms in the minimum on the right-hand side of $(\ref{eq:formula2})$ equals $\mr(M)$.

Let
\begin{equation*}
C = \begin{bmatrix}
C_{1,1} & C_{1,2} & 0\\
C_{2,1} & c_{2,2} & C_{2,3}\\
0 & C_{3,2} & C_{3,3}
\end{bmatrix}\in Q(M)
\end{equation*}
be such that $\rank(C) = \mr(M)$.
Then, by Lemma~\ref{lem:decomp}, at least one of the following holds:
\begin{enumerate}[(i)]
\item\label{item1} There exist vectors $v\in\reals^{n_1}$ and $z\in\reals^{m_1}$ such that
\begin{equation*}
\rank(\begin{bmatrix}
C_{1,1} & C_{1,2}\\
C_{2,1} & v^T C_{1,1} z
\end{bmatrix})+\rank(
\begin{bmatrix}
c_{2,2} - v^T C_{1,1} z & C_{2,3}\\
C_{3,2} & C_{3,3}
\end{bmatrix}) = \rank(C).
\end{equation*}
\item\label{item2} $\rank(\begin{bmatrix}
C_{1,1}\\
C_{2,1}
\end{bmatrix}) + \rank(\begin{bmatrix}
C_{2,3}\\
C_{3,3}
\end{bmatrix})+1 = \rank(C)$.
\item\label{item3} $\rank([C_{1,1}\;C_{1,2}]) + \rank([C_{3,2}\;C_{3,3}]) +1 = \rank(C)$.
\item\label{item4} $\rank(C_{1,1}) + \rank(C_{3,3}) + 2=\rank(C)$.
\end{enumerate}

Suppose first that $(\ref{item2})$ holds. Then 
\begin{equation*}
\begin{split}
\mr(\begin{bmatrix}
A_{1,1}\\
A_{2,1}
\end{bmatrix}) + \mr(\begin{bmatrix}
A_{2,3}\\
A_{3,3}
\end{bmatrix})+1 & \leq  \rank(\begin{bmatrix}
C_{1,1}\\
C_{2,1}
\end{bmatrix}) + \rank(\begin{bmatrix}
C_{2,3}\\
C_{3,3}
\end{bmatrix})+1\\
& = \rank(C) = \mr(M).
\end{split}
\end{equation*} 

Case $(\ref{item3})$ is similar to $(\ref{item2})$. 

Suppose next that $(\ref{item4})$ holds. Then
\begin{equation*}
\begin{split}
\mr(A_{1,1}) + \mr(A_{3,3}) + 2 & \leq \rank(C_{1,1}) + \rank(C_{3,3}) + 2 \\
& = \rank(C) = \mr(M).
\end{split}
\end{equation*}

Suppose finally that $(\ref{item1})$ holds. If $v^T C_{1,1} z > 0$, then 
\begin{equation*}
\begin{bmatrix}
C_{1,1} & C_{1,2}\\
C_{2,1} & v^T C_{1,1} z
\end{bmatrix}\in Q(R_+)\quad\text{and}\quad
\begin{bmatrix}
c_{2,2} - v^T C_{1,1} z & C_{2,3}\\
C_{3,2} & C_{3,3}
\end{bmatrix}\in Q(S_+).
\end{equation*}
Hence 
\begin{equation*}
\begin{split}
\mr(M) = \rank(C) &= \rank(\begin{bmatrix}
C_{1,1} & C_{1,2}\\
C_{2,1} & v^T C_{1,1} z
\end{bmatrix})+\rank(
\begin{bmatrix}
c_{2,2} - v^T C_{1,1} z & C_{2,3}\\
C_{3,2} & C_{3,3}
\end{bmatrix}) \\
&\geq \mr(R_+) + \mr(S_+).
\end{split}
\end{equation*}
The cases where $v^T C_{1,1} z = 0$ and $v^T C_{1,1} z < 0$ are similar.
\end{proof}

\section{Examples}
%%%%%%%%%%
We exhibit several examples of sign pattern matrices illustrating that each term in Formula~(\ref{mainformula}) is needed.

To see that the term $\mr(A_{1,1})+\mr(A_{3,3})+2$ is needed in Formula~(\ref{mainformula}), let 
\begin{equation*}
M = \begin{bmatrix}
A_{1,1} & A_{1,2}& 0\\
A_{2,1} & a_{2,2} & A_{2,3}\\
0 & A_{3,2} & A_{3,3}
\end{bmatrix}=\begin{bmatrix}
0 & + & 0\\
+ & 0 & +\\
0 & + & 0
\end{bmatrix},
\end{equation*} 
and, for $p\in \{+,-,0\}$, let
\begin{equation*}
R_p = \begin{bmatrix}
0 & +\\
+ & p
\end{bmatrix}
\text{ and }
S_p = \begin{bmatrix}
-p & +\\
+ & 0
\end{bmatrix}.
\end{equation*}
Observe that $\mr(M)=2$. Note that $\mr(A_{1,1})+ \mr(A_{3,3})+2=0+0+2=2$, while $\mr([A_{1,1}\;A_{1,2}])+\mr([A_{3,2}\;A_{3,3}])+1=1+1+1=3$, $\mr(\begin{bmatrix}
A_{1,1}\\
A_{2,1}
\end{bmatrix})+\mr(\begin{bmatrix}
A_{2,3}\\
A_{3,3}
\end{bmatrix})+1=1+1+1=3$, $\mr(R_+)+\mr(S_+)=2+2=4$, $\mr(R_0)+\mr(S_0)=2+2=4$, and $\mr(R_-)+\mr(S_-)=2+2=4$.

To see that the term
$\mr(\begin{bmatrix}
A_{1,1} & A_{1,2}
\end{bmatrix}) + 
\mr(\begin{bmatrix}
A_{3,2}& A_{3,3}
\end{bmatrix})+1$
is needed in Formula~(\ref{mainformula}), let 
\begin{equation*}
M = \begin{bmatrix}
A_{1,1} & A_{1,2} & 0\\
A_{2,1} & a_{2,2} & A_{2,3}\\
0 & A_{3,2} & A_{3,3}
\end{bmatrix} =
\begin{bmatrix}
+ & + & 0 & 0 & 0\\
+ & + & 0 & 0 & 0\\
0 & + & + & + & 0\\
0 & 0 & + & 0 & +\\
\end{bmatrix},
\end{equation*}
and let 
\begin{equation*}
R_p = \begin{bmatrix}
A_{1,1} & A_{1,2}\\
A_{2,1} & p
\end{bmatrix}=
\begin{bmatrix}
+ & + & 0\\
+ & + & 0\\
0 & + & p
\end{bmatrix}
\end{equation*}
and 
\begin{equation*}
S_p = \begin{bmatrix}
a_{2,2}-p & A_{2,3}\\
A_{3,2} & A_{3,3}
\end{bmatrix}=
\begin{bmatrix}
(+)-p & + & 0\\
+ & 0 & +
\end{bmatrix}.
\end{equation*}
Observe that $\mr(M) = 3$. Note that $\mr(A_{1,1}) + \mr(A_{3,3}) + 2 = 1+1+2 = 4$, $\mr([A_{1,1}\;A_{1,2}]) + \mr([A_{3,2}\;A_{3,3}]) + 1 = 1 + 1 + 1 = 3, \mr(\begin{bmatrix}
A_{1,1}\\
A_{2,1}
\end{bmatrix}) + 
\mr(\begin{bmatrix}
A_{2,3}\\
A_{3,3}
\end{bmatrix})+1 = 2 + 2 + 1 = 5$, $\mr(R_+) + \mr(S_+) = 2 + 2 = 4$, $\mr(R_0) + \mr(S_0) = 2 + 2 = 4$, $\mr(R_-) + \mr(S_-) = 2+2=4$.

Taking the transpose of $M$ in the previous example shows that the term 
$\mr(\begin{bmatrix}
A_{1,1}\\
A_{2,1}
\end{bmatrix}) + 
\mr(\begin{bmatrix}
A_{2,3}\\
A_{3,3}
\end{bmatrix})+1$
is needed in Formula~(\ref{mainformula}).

To see that the term $\mr(R_+)+\mr(S_+)$ is needed in Formula~(\ref{mainformula}), let
\begin{equation*} 
M = \begin{bmatrix}
A_{1,1} & A_{1,2}& 0\\
A_{2,1} & a_{2,2} & A_{2,3}\\
0 & A_{3,2} & A_{3,3}
\end{bmatrix}=\begin{bmatrix}
+ & + & 0\\
+ & - & -\\
0 & + & +
\end{bmatrix},
\end{equation*}
and let
\begin{equation*}
R_p = \begin{bmatrix}
A_{1,1} & A_{1,2}\\
A_{2,1} & p
\end{bmatrix}=
\begin{bmatrix}
+ & +\\
+ & p
\end{bmatrix}
\end{equation*}
and
\begin{equation*}
S_p = \begin{bmatrix}
a_{2,2}-p & A_{2,3}\\
A_{3,2} & A_{3,3}
\end{bmatrix}=
\begin{bmatrix}
(-)-p & -\\
+ & +
\end{bmatrix}.
\end{equation*}

Observe that $\mr(M)=2$. Note that $\mr(A_{1,1})+ \mr(A_{3,3})+2=1+1+2=4, \mr([A_{1,1}\;A_{1,2}])+\mr([A_{3,2}\;A_{3,3}])+1=1+1+1=3,\mr(\begin{bmatrix}
A_{1,1}\\
A_{2,1}
\end{bmatrix})+\mr(\begin{bmatrix}
A_{2,3}\\
A_{3,3}
\end{bmatrix})+1=1+1+1=3, \mr(R_+)+\mr(S_+)=1+1=2, \mr(R_0)+\mr(S_0)=2+1=3,\mr(R_-)+\mr(S_-)=2+1=3$. 

Taking $-M$ in the previous example shows that the term 
$\mr(R_-)+\mr(S_-)$ is needed in Formula~(\ref{mainformula}).

To see that the term $\mr(R_0)+\mr(S_0)$ is needed in Formula~(\ref{mainformula}), let
\begin{equation*}
M = \begin{bmatrix}
A_{1,1} & A_{1,2}& 0\\
A_{2,1} & a_{2,2} & A_{2,3}\\
0 & A_{3,2} & A_{3,3}
\end{bmatrix}=\begin{bmatrix}
+ & 0 & 0\\
+ & - & +\\
0 & + & -
\end{bmatrix},
\end{equation*} 
and let
\begin{equation*}
R_p = \begin{bmatrix}
A_{1,1} & A_{1,2}\\
A_{2,1} & p
\end{bmatrix}=
\begin{bmatrix}
+ & +\\
+ & p
\end{bmatrix}
\end{equation*}
and
\begin{equation*}
S_p = \begin{bmatrix}
a_{2,2}-p & A_{2,3}\\
A_{3,2} & A_{3,3}
\end{bmatrix}=
\begin{bmatrix}
(-)-p & -\\
+ & +
\end{bmatrix}.
\end{equation*}

Observe that $\mr(M)=2$. Note that $\mr(A_{1,1})+ \mr(A_{3,3})+2=1+1+2=4, \mr([A_{1,1}\;A_{1,2}])+\mr([A_{3,2}\;A_{3,3}])+1=1+1+1=3,\mr(\begin{bmatrix}
A_{1,1}\\
A_{2,1}
\end{bmatrix})+\mr(\begin{bmatrix}
A_{2,3}\\
A_{3,3}
\end{bmatrix})+1=1+1+1=3, \mr(R_+)+\mr(S_+)=2+1=3, \mr(R_0)+\mr(S_0)=1+1=2,\mr(R_-)+\mr(S_-)=2+1=3$.

\newcommand{\noopsort}[1]{}

%\bibliographystyle{abbrv}
%\bibliography{../../biblio}

\begin{thebibliography}{1}

\bibitem{MR3010007}
F.~Barioli, W.~Barrett, S.~M. Fallat, H.~T. Hall, L.~Hogben, B.~Shader,
  P.~van~den Driessche, and H.~van~der Holst.
\newblock Parameters related to tree-width, zero forcing, and maximum nullity
  of a graph.
\newblock {\em J. Graph Theory}, 72(2):146--177, 2013.

\bibitem{BarFalHog2004a}
F.~Barioli, S.~Fallat, and L.~Hogben.
\newblock Computation of minimal rank and path cover number for certain graphs.
\newblock {\em Linear Algebra Appl.}, 392:289--303, 2004.

\bibitem{FalHog2007}
S.~Fallat and L.~Hogben.
\newblock The minimum rank of symmetric matrices described by a graph: A
  survey.
\newblock {\em Linear Algebra Appl.}, 426(2--3):558--582, 2007.

\bibitem{FalJoh1999a}
S.~M. Fallat and C.~R. Johnson.
\newblock Sub-direct sums and positivity classes of matrices.
\newblock {\em Linear Algebra Appl.}, 288:149--173, February 1999.

\bibitem{Hall07}
F.~J. Hall and Z.~Li.
\newblock Sign pattern matrices.
\newblock In L.~Hogben, editor, {\em Handbook of Linear Algebra}, chapter~33.
  Simon and Hall/CRC Press, 2007.

\bibitem{Hsieh2001}
L.-Y. Hsieh.
\newblock {\em On Minimum Rank Matrices having a Prescribed Graph}.
\newblock PhD thesis, University of Wisconsin-Madison, 2001.

\bibitem{LivdHolst2013}
Z.~Li, Y.~Gao, M.~Arav, F.~Gong, W.~Gao, F.~Hall, and H.~van~der Holst.
\newblock Sign patterns with minimum rank 2 and upper bounds on minimum ranks.
\newblock {\em Linear and Multilinear Algebra}, 61:895--908, 2013.

\bibitem{Mikkelson:2008aa}
R.~C. Mikkelson.
\newblock {\em Minimum rank of graphs that allow loops}.
\newblock PhD thesis, Iowa State University, 2008.

\end{thebibliography}

\end{document}